\theoremstyle{plain}
\newtheorem{thm}{Theorem}\crefname{thm}{Theorem}{Theorems}
\newtheorem{lem}[thm]{Lemma}\crefname{lem}{Lemma}{Lemmas}
\newtheorem{prop}[thm]{Proposition}\crefname{prop}{Proposition}{Propositions}
\newtheorem{cor}[thm]{Corollary}\crefname{cor}{Corollary}{Corollaries}
\theoremstyle{definition}
\crefname{defn}{Definition}{Definitions}
\crefname{conj}{Conjecture}{Conjectures}
\theoremstyle{remark}
\newtheorem{rem}{Remark}\crefname{rem}{Remark}{Remarks}
\crefname{obs}{Observation}{Observations}
\def\1{{\mathchoice {1\mskip-4mu\mathrm l}      
{1\mskip-4mu\mathrm l}
{1\mskip-4.5mu\mathrm l} {1\mskip-5mu\mathrm l}}}
\newcommand{\ssup}[1] {{\scriptscriptstyle{({#1}})}}
\newcommand{\N}{\mathbb{N}}
\newcommand{\EE}{\mathbb{E}}
\newcommand{\PP}{\mathbb{P}}
\newcommand{\bP}{\mathbf{P}}
\newcommand{\bE}{\mathbf{E}}
\newcommand{\e}{\textrm{e}}
\newcommand{\dint}{\textrm{ d}}
\newcommand{\wi}{W^{\ssup{\textrm{in}}}}
\newcommand{\wo}{W^{\ssup{\textrm{out}}}}
\newcommand{\Gg}{\mathcal{G}}
\newcommand{\Di}{D^{\ssup{\textrm{in}}}}
\newcommand{\Do}{D^{\ssup{\textrm{out}}}}
\newcommand{\vi}{\nu^{\ssup{\textrm{in}}}}
\newcommand{\vo}{\nu^{\ssup{\textrm{out}}}}
\newcommand{\cW}{\mathcal{W}}
\begin{document}


\title{Conditionally Poissonian random digraphs}
\titlehead{Poissonian digraphs}
\author{Christian M\"onch}

\maketitle

\begin{abstract}
We define a Poissonian model of directed random graphs which generalises the
undirected Poissonian random graph process introduced by Norros and Reittu in
Adv. Appl. Probab. 38 (2006), 59--75. Its loopless simple projection is a
rank-one independent-arc inhomogeneous digraph of the type studied by Cao and
Olvera-Cravioto, Random Struct. Alg. 56 (2020), 722--774. For the Poissonian
multigraph itself, we discuss the relation to Norros-Reittu graphs,
characterise limiting degree distributions, and record explicit exploration
estimates. In particular, we give fixed-depth directed local weak limits,
stopped branching-process couplings with weight-mass collision budgets, a
comparison with the simple projection, and a rare-event concentration
criterion. These estimates are intended as graph-side structural inputs for
later dynamics on the graph.
\par\smallskip

{\footnotesize
\noindent\emph{MSC Classification:} Primary 05C80; Secondary 05C12.\\
\noindent\emph{Keywords:} directed networks, scale-free graphs, branching
processes, Norros-Reittu model, inhomogeneous random graphs}
\end{abstract}


\section{Motivation and positioning}
We study sparse, inhomogeneous random digraphs of large but finite size.
Sparseness means that the number of directed edges, or arcs, is of the same
order as the number of vertices.  Inhomogeneity means that arc probabilities
may depend on both endpoints.  The main regime of interest is scale-free:
the proportion of vertices with in- or out-degree $k$ may decay as
$k^{-\tau}$ for some $\tau>2$.

For undirected inhomogeneous graphs, there is now a large body of structural
and process-level theory; see, for example,
\cite{durrett2007random,van2016random}.  Directed models require more care.
They are natural in applications such as random walks
\cite{bordenave2015random,cooper2012stationary}, PageRank algorithms
\cite{chen2016generalized}, and threshold contact processes
\cite{Chatterjee2016}, but the direction of arcs makes the local exploration
asymmetric and separates forward and backward connectivity.

One standard way to prescribe a sparse directed inhomogeneous graph is the
directed configuration model: fix a random or deterministic in- and
out-degree sequence and choose a graph with that degree sequence uniformly.
This is the graph model in
\cite{bordenave2015random,cooper2012stationary,chen2016generalized,Chatterjee2016},
and structural results for it appear in \cite{cooper2004size}.  Its main
technical cost is that the arcs are not independent.

Independent-arc models avoid this dependence.  A broad inhomogeneous digraph
model was proposed in \cite{bloznelis2012birth}, following the undirected
framework of \cite{bollobas2007phase}.  The later framework of
\cite{cao2020connectivity} covers a general class of loopless independent-arc
inhomogeneous digraphs and proves, among other things, degree and component
results by exploration methods.  The loopless simple projection of the
present model is a rank-one member of that class.  The Poissonian multigraph
itself is a parallel-edge analogue, and here we keep the exploration errors in
an explicit weight-mass form.

This specialization is useful in the heavy-tailed regime.  Bounded-kernel
formulations are not especially convenient for power-law graphs with
$\tau\leq3$, where second moments may be infinite; compare
\cite[Sections 16.4 and 18]{bollobas2007phase}.  The Poissonian
normalisation below accommodates this range directly and allows dependence
between in- and out-weights.  The construction is the directed analogue of
the conditionally Poissonian, or Norros-Reittu, random graph process of
\cite{norros_conditionally_2006}.

\section{Model definition}
The graph is a directed multigraph with a Poissonian number of arcs between
ordered vertex pairs.  It is parametrised by a bivariate probability law
$\lambda_W$ on $(0,\infty)\times(0,\infty)$, called the \emph{weight
distribution}.  We write $W=(\wi,\wo)$ for a generic random variable with
law $\lambda_W$.  The two coordinates represent the asymptotic in- and
out-propensities of a typical vertex.  Sparseness requires matching finite
means,
\[
  \EE \wi=\EE\wo=:\mu<\infty.
\]
We also write
\begin{align*}
  \nu&:=(\vi,\vo)=\bigl(\EE[(\wi)^2],\EE[(\wo)^2]\bigr)
       \in [0,\infty]\times[0,\infty],\\
  \rho&:= \EE(\wi\wo)\leq\sqrt{\vi\vo}.
\end{align*}

Let $\cW=(W_n)_{n\in \N}$ denote a sequence of i.i.d.\
$\lambda_W$-distributed random variables, that is,
$W_n=(\wi_n,\wo_n)\sim W$.  Here and throughout the article `$\sim$'
denotes equality in distribution.  Given $\cW$ and $N\in\N$, we define a
random directed multigraph $\Gg_N(\cW)$ with vertex set
$[N]:=\{1,\dots,N\}$ by inserting for each pair
$(v,w)\in [N]\times[N]$ independently a random number $E(v,w)$ of directed
edges, where
\begin{equation}\label{eq_edgedistr}
E(v,w)\sim\textrm{Poiss}\Big(\frac{\wo_v\wi_w}{L_N}\Big) 
\end{equation}
The normalisation $L_N$ is positive, deterministic or measurable with
respect to $\cW$, satisfying the standing assumption
\begin{equation}\label{eq_LNnorm}
\frac{L_N}{N}\longrightarrow\mu
\qquad\text{almost surely.}
\end{equation}
By the strong law of large numbers this implies the edge-density relation
\begin{equation}\label{eq_LNdef}
\frac{1}{L_N}\sum_{v,w\in[N]}\wo_v\wi_w
=
\big(\mu+o(1)\big)N
\qquad\text{almost surely,}
\end{equation}
which guarantees the sparseness of the graph. We use the common asymptotic
notations $o(\cdot),O(\cdot)$ and $\Theta(\cdot)$. A subscript indicates that
the implicit limit statement holds $\PP$-\emph{asymptotically almost surely},
i.e.\ with probability tending to one under $\PP$ as $N\to\infty$.\footnote{Another
common terminology for this type of statement is `with high
probability'. In this article the reference measure sometimes changes, so we
include it in the notation.}

\begin{rem}
In the undirected Norros-Reittu model, each vertex just gets one weight and
the normalisation $L_N$ equals the sum of all weights. In the directed setting,
there is no straightforward equivalent to this setup. If $|\nu|<\infty$ then
the deterministic choice $L_N=\mu N$ already yields a satisfactory model; its
simple projection is a special case of the model in
\cite{bloznelis2012birth}.
Empirical choices such as $L_N=S_N^{\ssup{\textrm{in}}}$ or
$L_N=S_N^{\ssup{\textrm{out}}}$ also satisfy \eqref{eq_LNnorm}. With a view
toward applications and to include `randomly directed' Norros-Reittu graphs as
a special case of our model, see \cref{sec_NRrelation}, we keep the
normalisation abstract and only impose \eqref{eq_LNnorm}. The weaker relation
\eqref{eq_LNdef} records sparseness, while the degree and exploration limit
theorems below use the stronger normalisation \eqref{eq_LNnorm}.
\end{rem}

The graph is defined conditionally on the weight sequence $\cW$.  We assume
that $\cW$ and the family $(\Gg_N)_{N\in\N}$ are defined on the same
probability space.  The corresponding measure is denoted by $\PP$, and
blackboard font quantities refer to this measure.  Conditional probabilities
and expectations given $\cW$ are written as $\bP_\cW,\bE_\cW$, and so on.
\smallskip

In the following sections we will
\begin{itemize}
\item compare the model with Norros-Reittu graphs and explain why component
statements are treated as external input;
\item calculate the limiting degree law of a uniformly chosen vertex;
\item prove empirical degree convergence;
\item record explicit exploration estimates for typical and growing
neighbourhoods;
\item compare the Poissonian multigraph with its simple projection and state
a rare-event concentration criterion.
\end{itemize}

The general component structure of independent-arc inhomogeneous digraphs is
covered by the framework of \cite{cao2020connectivity}.  Our emphasis is on
the explicit Poissonian rank-one estimates needed as graph-side inputs for
process-level arguments.

\section{Relation to the Norros-Reittu graph process with i.i.d.\ weights}\label{sec_NRrelation}
We now discuss the relation with the original (undirected) Norros-Reittu graph
process $(\text{NR}_N)_{N\in\N}$ and explain how two elementary constructions
carry over to the directed model.  The point is not to rederive the general
component theory.  Rather, the comparison identifies familiar special cases
of $\Gg_N(\cW)$ and fixes the orientation conventions used in the examples
below.  General giant strongly connected component results for
independent-arc inhomogeneous digraphs are treated in
\cite{cao2020connectivity}.
\medskip

To begin with, we give the original definition of $\text{NR}_N$ given in
\cite{norros_conditionally_2006}.  Conditionally on a sequence
$\Lambda=(\Lambda_1,\Lambda_2,\dots)$ of i.i.d.\ capacities, define a random
multigraph $\text{NR}_N(\Lambda)$ by inserting independently for each
unordered pair $\{v,w\}$ of vertices in $[N]$ a random number $E\{v,w\}$ of
edges with
\[
 E\{v,w\}
 =
 \text{Poisson}\Big(\frac{\Lambda_v\Lambda_w}{\bar{L}_N}\Big),
\]
where $\bar{L}_N=\sum_{v\in[N]}\Lambda_v$.  Clearly, the capacities in
$\text{NR}_N$ play the role of the weights in our model.
\medskip

There is a simple exact orientation representation of this construction.
Let \(W_v=(\Lambda_v,\Lambda_v)\) and \(L_N=\sum_{v\in[N]}\Lambda_v\).
Then, conditional on \(\Lambda\), the off-diagonal arcs of
\(\Gg_N(\cW)\) have the same distribution as both of the following models:
\begin{enumerate}[(i)]
\item the sum of two independently generated \(\text{NR}_N(\Lambda)\)
models, with every edge of the first copy oriented from the smaller label to
the larger label and every edge of the second copy oriented in the opposite
direction;
\item a graph \(\text{NR}_N(2\Lambda)\) in which each edge is oriented
independently with probability \(1/2\) in each direction.
\end{enumerate}
Equivalently, if \(W_v=(\Lambda_v/2,\Lambda_v/2)\) and
\(L_N=\frac12\sum_{v\in[N]}\Lambda_v\), then the off-diagonal arcs of
\(\Gg_N(\cW)\) have the same distribution as an independently oriented
\(\text{NR}_N(\Lambda)\) graph.
Loops are excluded from these exact representations.  The directed model with
\(W_v=(\Lambda_v,\Lambda_v)\) has loop intensity \(\Lambda_v^2/L_N\), whereas
the two-NR-copy and \(\text{NR}_N(2\Lambda)\) constructions have loop intensity
\(2\Lambda_v^2/L_N\); similarly, orienting a loop in
\(\text{NR}_N(\Lambda)\) does not split it into two directions.  Thus the full
multigraph laws agree after deleting loops, or after independently thinning
the loops in the Norros-Reittu-side constructions by a factor \(1/2\).

\begin{prop}[Orientations of Norros-Reittu graphs]\label[prop]{prop_highcorrel}
The orientation representations in the preceding paragraph hold for the
off-diagonal edge multiplicities.
\end{prop}
\begin{proof}
For \(u<v\), the directed model with \(W_v=(\Lambda_v,\Lambda_v)\) and
\(L_N=\sum_v\Lambda_v\) has independent arc counts
\[
 E(u,v),E(v,u)\sim
 \operatorname{Poisson}\left(\frac{\Lambda_u\Lambda_v}{L_N}\right).
\]
This is exactly what is obtained from two independent
\(\text{NR}_N(\Lambda)\) graphs after orienting one copy from smaller to
larger labels and the other copy from larger to smaller labels.

For the second representation, the unordered edge multiplicity in
\(\text{NR}_N(2\Lambda)\) has Poisson mean
\[
 \frac{(2\Lambda_u)(2\Lambda_v)}{2L_N}
 =
 2\frac{\Lambda_u\Lambda_v}{L_N}.
\]
Independent orientation with probability \(1/2\) splits this Poisson variable
into two independent Poisson variables of mean
\(\Lambda_u\Lambda_v/L_N\).

The final statement is the same Poisson-splitting calculation with directed
mean
\[
 \frac{(\Lambda_u/2)(\Lambda_v/2)}{\frac12\sum_w\Lambda_w}
 =
 \frac12\frac{\Lambda_u\Lambda_v}{\sum_w\Lambda_w},
\]
which is the mean in each direction after independently orienting
\(\text{NR}_N(\Lambda)\).
\end{proof}
\medskip

Before we conclude this section, we return to the general setting and observe that, when the chosen normalisation is non-decreasing, due to Poisson thinning our model has a dynamical representation as a graph process just like the Norros-Reittu model.
\begin{prop}
Assume in addition that $L_N\le L_{N+1}$. Conditional on $\cW$, $\Gg_{N+1}$ may be obtained from $\Gg_{N}$ by removing each existing edge independently with probability $1-L_N/L_{N+1}$ and adding only vertex $N+1$ together with its incoming and outgoing edges as specified in \eqref{eq_edgedistr}.
\end{prop}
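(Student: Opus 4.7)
The plan is to verify that the proposed construction produces a random graph with exactly the joint edge-count distribution specified by \eqref{eq_edgedistr} with parameter $N+1$. Throughout I would work under the conditional measure $\bP_\cW$, so that all edge-count variables are independent Poisson variables with deterministic parameters, and the monotonicity of $L_N$ (which is part of the setup) guarantees that $L_N/L_{N+1}\in[0,1]$ so that the thinning probability is well defined.

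The single tool needed is the classical Poisson thinning lemma: if $X\sim\mathrm{Poisson}(\lambda)$ and, conditional on $X$, each of the $X$ points is kept independently with probability $q$, then the number of retained points is $\mathrm{Poisson}(q\lambda)$, and the discarded count is an independent $\mathrm{Poisson}((1-q)\lambda)$. I would apply this pair-by-pair: for each $(v,w)\in[N]\times[N]$, the edge count $E(v,w)$ in $\Gg_N$ is $\mathrm{Poisson}(\wo_v\wi_w/L_N)$, and thinning with $q=L_N/L_{N+1}$ produces a $\mathrm{Poisson}(\wo_v\wi_w/L_{N+1})$ retained count, matching the prescription of \eqref{eq_edgedistr} at size $N+1$. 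Since the original counts are mutually independent and the thinnings are carried out with independent randomness across pairs, the retained counts are again mutually independent.

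The final step is to append vertex $N+1$ together with all arcs having $N+1$ as an endpoint (including the loop at $N+1$), drawn according to \eqref{eq_edgedistr} independently of everything already constructed. This yields the full array of independent Poisson edge counts on $[N+1]\times[N+1]$ with parameters $\wo_v\wi_w/L_{N+1}$, which is by definition the conditional law of $\Gg_{N+1}$ given $\cW$. There is no real obstacle: the argument is a purely distributional identity, and the only thing to be mildly careful about is keeping track of independence between the thinning randomness, the original edges of $\Gg_N$, and the freshly sampled arcs incident to vertex $N+1$.
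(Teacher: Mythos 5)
Your proposal is correct and is essentially the same argument as the paper's: the paper's proof consists precisely of the Poisson-thinning observation (retention probability $L_N/L_{N+1}$ turns a $\mathrm{Poisson}(\wo_v\wi_w/L_N)$ count into a $\mathrm{Poisson}(\wo_v\wi_w/L_{N+1})$ count) together with a comparison of the edge-number arrays, which you simply spell out in more detail, including the independence bookkeeping and the role of the monotonicity of $L_N$.
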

\begin{proof}
This follows immediately upon comparing the edge numbers in both graphs and using the fact that independent thinning with removal probability $p$ of a $\text{Poisson}(\lambda)$-distribution yields a $\text{Poisson}((1-p)\lambda)$-distribution.
\end{proof}

\section{Component structure: external inputs and a warning}
The orientation representations in \cref{prop_highcorrel} are useful for
identifying familiar special cases of the model, but they do not by themselves
transfer component-size statements from the constituent undirected layers to
the directed graph.  We record the basic terminology and then treat
macroscopic component structure as external input.

If there exists a directed path from $v$ to $w$ in $\Gg_N$ we write $v\to w$,
and if there is also a directed path from $w$ to $v$ we write
$v\leftrightarrow w$. Clearly, `$\leftrightarrow $' induces an equivalence
relation on $[N]$ and the equivalence classes, or rather the subgraphs induced
by them, are called \emph{strongly connected components} of $\Gg_N$.  Fix
$v\in[N]$. The subgraph induced by
$\mathcal{F}[v]=\{u\in [N]:v\to u\}$ is called the \emph{forward component} or
\emph{forward cluster} of $v$, and the set
$\mathcal{B}[v]=\{u\in [N]:u\to v\}$ is called the \emph{backward component}
or \emph{backward cluster} of $v$.\footnote{By an abuse of notation, we will
not distinguish between vertex sets and the subgraphs they induce in
$\Gg_N(\cW)$.} $\mathcal{C}[v]=\mathcal{F}[v]\cap\mathcal{B}[v]$ denotes the
strongly connected component of $v$. A weak component is a connected component
of the underlying undirected projection of \(\Gg_N(\cW)\).

One has to be careful with sums or unions of oriented layers.  The weak
projection of a sum of directed graphs is the union of the weak projections of
the layers, and components of the individual layers may merge.  Thus weak
components are not inherited unchanged from the constituent graphs.  For
example, with constant capacities \(\Lambda\equiv c\) and \(1/2<c<1\), two
independent Norros-Reittu layers each have subcritical mean degree \(c\), but
their weak union has asymptotic mean degree \(2c>1\) and therefore has a
giant weak component.

For this reason we do not state component-size corollaries from the oriented
representations above.  The general macroscopic component structure of
independent-arc inhomogeneous digraphs is covered by
\cite{bloznelis2012birth,cao2020connectivity}; critical undirected
Norros-Reittu component asymptotics can be found, for example, in
\cite{van2013critical}.  The remainder of the paper focuses instead on degree
laws and explicit exploration estimates for the rank-one Poissonian model;
these are the parts that are most useful for later process-level arguments.

\section[Vertex degrees in the Poissonian digraph]{Vertex degrees in $\Gg_N(\cW)$}
We use the following conventions for indegree and outdegree of a vertex
$v\in[N]$:
\[
 D_v=(\Di_v,\Do_v),\qquad
 \Di_v:=\sum_{u\in [N]\setminus\{v\}}E(u,v),
 \qquad
 \Do_v:=\sum_{u\in [N]\setminus\{v\}}E(v,u).
\]
Thus loops are not counted towards either directed degree.  When total degree
is mentioned, we count a loop once and write
$\deg(v):=\Di_v+\Do_v+E(v,v)$.  If $v>N$, we set $D_v=(0,0)$.

\begin{prop}[Conditional degrees]\label{prop:conditional-degrees}
For $v\in[N]$, conditional on $\cW$, the random variables $\Di_v$ and
$\Do_v$ are independent and
\[
 \Di_v\sim \operatorname{Poisson}(\Lambda_v^{\ssup{\textrm{in}}}),
 \qquad
 \Do_v\sim \operatorname{Poisson}(\Lambda_v^{\ssup{\textrm{out}}}),
\]
where
\[
 \Lambda_v^{\ssup{\textrm{in}}}
   :=\frac{\wi_v}{L_N}\sum_{u\in[N]\setminus\{v\}}\wo_u,
 \qquad
 \Lambda_v^{\ssup{\textrm{out}}}
   :=\frac{\wo_v}{L_N}\sum_{u\in[N]\setminus\{v\}}\wi_u.
\]
Moreover,
\[
 \deg(v)\sim
 \operatorname{Poisson}\left(
   \Lambda_v^{\ssup{\textrm{in}}}
   +\Lambda_v^{\ssup{\textrm{out}}}
   +\frac{\wo_v\wi_v}{L_N}
 \right).
\]
\end{prop}
\begin{proof}
Under $\bP_\cW$ the edge variables are independent Poisson random variables.
The variables entering $\Di_v$ and $\Do_v$ form two disjoint families, and
the loop variable $E(v,v)$ is disjoint from both.  The assertion follows from
stability of Poisson laws under convolution.
\end{proof}

The only dependence between the degree pairs of finitely many fixed vertices
comes from the finitely many arcs whose two endpoints both lie in that set.
Their total intensity is $O(L_N^{-1})$, and therefore vanishes in the sparse
limit.
\begin{prop}[Finite-dimensional degree convergence]\label{prop:finite-degree-convergence}
For $\PP$-almost every weight sequence $\cW$ and every finite set of distinct
fixed vertices $V=\{v_1,\dots,v_m\}\subset\N$,
\[
 \big(D_{v_1},\dots,D_{v_m}\big)
 \overset{\bP_\cW}{\longrightarrow}
 \big(\widetilde D_{v_1},\dots,\widetilde D_{v_m}\big),
 \qquad N\to\infty,
\]
where the limiting degree pairs are independent and
\[
 \widetilde D_{v_i}\sim
 \operatorname{Poisson}(\wi_{v_i})
 \otimes
 \operatorname{Poisson}(\wo_{v_i}).
\]
\end{prop}
\begin{proof}
It is enough to consider $N\geq \max V$.  Define truncated degree pairs
$\widehat D_{v_i}=(\widehat\Di_{v_i},\widehat\Do_{v_i})$ by deleting all
non-loop arcs with both endpoints in $V$:
\[
 \widehat\Di_{v_i}:=\sum_{u\in[N]\setminus V}E(u,v_i),
 \qquad
 \widehat\Do_{v_i}:=\sum_{u\in[N]\setminus V}E(v_i,u).
\]
The edge-variable families defining
$\widehat D_{v_1},\dots,\widehat D_{v_m}$ are pairwise disjoint, hence these
degree pairs are independent under $\bP_\cW$.  Their conditional means are
\[
 \widehat\Lambda_{v_i}^{\ssup{\textrm{in}}}
 =\frac{\wi_{v_i}}{L_N}\sum_{u\in[N]\setminus V}\wo_u,
 \qquad
 \widehat\Lambda_{v_i}^{\ssup{\textrm{out}}}
 =\frac{\wo_{v_i}}{L_N}\sum_{u\in[N]\setminus V}\wi_u.
\]
By the strong law of large numbers and \eqref{eq_LNnorm},
$\sum_{u=1}^N\wi_u/L_N\to1$ and
$\sum_{u=1}^N\wo_u/L_N\to1$ almost surely.  Since $V$ is fixed, the displayed
means converge to $\wi_{v_i}$ and $\wo_{v_i}$, respectively.  Thus the
truncated degree vector has the stated limit.

It remains to compare the original and truncated vectors.  They differ only
if at least one non-loop arc between two distinct vertices of $V$ is present.
By Markov's inequality,
\[
 \bP_\cW\left(
   (D_{v_1},\dots,D_{v_m})
   \neq
   (\widehat D_{v_1},\dots,\widehat D_{v_m})
 \right)
 \leq
 \sum_{\substack{i,j=1\\ i\neq j}}^m
   \frac{\wo_{v_i}\wi_{v_j}}{L_N}
 \longrightarrow 0.
\]
The conclusion follows.
\end{proof}

If $\rho$ is finite then the model produces a tight, in fact asymptotically
Poisson, number of loops.
\begin{prop}[Number of loops]\label{prop:number-of-loops}
Assume that $\rho<\infty$.  Then the total number of loops
\[
 C_N^\circ:=\sum_{v=1}^N E(v,v)
\]
converges in distribution to $\operatorname{Poisson}(\rho/\mu)$.  The same
convergence holds conditionally under $\bP_\cW$ for $\PP$-almost every weight
sequence $\cW$.
\end{prop}
\begin{proof}
Conditional on $\cW$, the loop variables are independent, with
\[
 E(v,v)\sim \operatorname{Poisson}(\wi_v\wo_v/L_N).
\]
Hence
$C_N^\circ$ is Poisson with parameter
\[
 \Theta_N
 :=\sum_{v=1}^N\frac{\wi_v\wo_v}{L_N}
 =\frac{N}{L_N}\left(\frac1N\sum_{v=1}^N\wi_v\wo_v\right).
\]
By the strong law of large numbers and \eqref{eq_LNnorm},
$\Theta_N\to\rho/\mu$ almost surely.  Since Poisson probabilities are
continuous functions of their parameter, the conditional laws converge to
$\operatorname{Poisson}(\rho/\mu)$; annealed convergence follows by dominated
convergence.
\end{proof}

Finally, the degree of a uniformly chosen vertex has the expected mixed
Poisson law.
\begin{cor}\label{cor:typical-degree}
Let $U_N$ be uniformly chosen from $[N]$, independently of the graph, and put
$D(N)=(\Di_{U_N},\Do_{U_N})$.  Then
\[
 D(N)\longrightarrow D
 \quad\text{in distribution as }N\to\infty,
\]
where, conditionally on a mixing variable $W=(\wi,\wo)$ with law
$\lambda_W$,
\[
 D\sim \operatorname{Poisson}(\wi)\otimes\operatorname{Poisson}(\wo).
\]
\end{cor}
\begin{proof}
Let
\[
 A_N:=\frac{\wi_{U_N}}{L_N}\sum_{u\in[N]\setminus\{U_N\}}\wo_u,
 \qquad
 B_N:=\frac{\wo_{U_N}}{L_N}\sum_{u\in[N]\setminus\{U_N\}}\wi_u.
\]
By \cref{prop:conditional-degrees}, conditional on $\cW$ and $U_N$, the two
coordinates of $D(N)$ are independent Poisson variables with parameters
$A_N$ and $B_N$.  Since
$\sum_{u=1}^N\wi_u/L_N\to1$ and
$\sum_{u=1}^N\wo_u/L_N\to1$ in probability, and since
$(\wi_{U_N},\wo_{U_N})$ has law $\lambda_W$ for every $N$,
\[
 (A_N,B_N)-(\wi_{U_N},\wo_{U_N})\longrightarrow(0,0)
 \quad\text{in probability}.
\]
Here the omitted self-terms $\wi_{U_N}\wo_{U_N}/L_N$ vanish in probability
because $\wi_{U_N}\wo_{U_N}$ is tight while $L_N$ grows linearly.
Thus $(A_N,B_N)$ converges in distribution to $W$.  For each
$k,l\in\N_0$,
\[
 \PP(D(N)=(k,l))
 =
 \EE\left[
   \e^{-A_N}\frac{A_N^k}{k!}
   \e^{-B_N}\frac{B_N^l}{l!}
 \right]
 \longrightarrow
 \EE\left[
   \e^{-\wi}\frac{(\wi)^k}{k!}
   \e^{-\wo}\frac{(\wo)^l}{l!}
 \right],
\]
because the Poisson kernels are bounded continuous functions of the
parameters.  Pointwise convergence of the probabilities on the countable
space $\N_0^2$ proves the claim.
\end{proof}

\section[Empirical degree distribution]{Empirical degree distribution in $\Gg_N(\cW)$}
We turn to the empirical degree distribution. Let $\Di_v,\Do_v$ denote the
in- and outdegree of vertex $v\in[N]$ in the graph $\Gg_N(\cW)$, suppressing
the dependence on $N$ for notational convenience.  Put
\[
 \eta_N(k,l):=\frac{1}{N}\sum_{v=1}^{N}\1\{\Di_v=k,\Do_v=l\},
 \qquad k,l\in\N_0,
\]
and let $\eta$ be the mixed-Poisson law
\[
 \eta(k,l):=
 \EE\left[
 \e^{-\wi}\frac{(\wi)^k}{k!}
 \e^{-\wo}\frac{(\wo)^l}{l!}
 \right].
\]

\begin{thm}[Empirical degree distribution]\label{thm:quencheddegdis}
Under the standing normalisation assumption \eqref{eq_LNnorm},
\[
 d_{\textup{TV}}(\eta_N,\eta)\longrightarrow 0
\]
in $\PP$-probability.  More precisely, for almost every weight sequence
satisfying the conclusions of the strong law and \eqref{eq_LNnorm}, the same
convergence holds in $\bP_\cW$-probability.
\end{thm}

For $m\in\N_0$ write
\[
 \pi_m(x):=\e^{-x}\frac{x^m}{m!},\qquad x\ge0.
\]

\begin{lem}\label{lem:Poissdisbound}
For each $m\in\N_0$ the function $\pi_m$ is bounded and globally Lipschitz on
$[0,\infty)$.  Consequently, for fixed $k,l\in\N_0$, the function
$h_{k,l}(x,y):=\pi_k(x)\pi_l(y)$ is bounded and globally Lipschitz on
$[0,\infty)^2$.
\end{lem}
\begin{proof}
For $m=0$, $\pi_0'(x)=-\e^{-x}$ is bounded.  For $m\ge1$,
\[
 \pi_m'(x)=\e^{-x}\left(\frac{x^{m-1}}{(m-1)!}
        -\frac{x^m}{m!}\right),
\]
which is bounded on $[0,\infty)$.  Hence $\pi_m$ is globally Lipschitz by the
mean value theorem.  The product statement follows because all $\pi_m$ are
bounded by one.
\end{proof}

\begin{lem}\label{lem:degree-mean}
Fix $k,l\in\N_0$ and set
\[
 \bar\eta_N(k,l):=\frac1N\sum_{v=1}^N\pi_k(\wi_v)\pi_l(\wo_v).
\]
For almost every weight sequence satisfying \eqref{eq_LNnorm},
\[
 \bE_\cW\eta_N(k,l)-\bar\eta_N(k,l)\longrightarrow0.
\]
\end{lem}
\begin{proof}
Let
\[
 a_{v,N}:=\wi_v\frac{\sum_{u\ne v}\wo_u}{L_N},
 \qquad
 b_{v,N}:=\wo_v\frac{\sum_{u\ne v}\wi_u}{L_N}.
\]
By the conditional degree law,
\[
 \bE_\cW\eta_N(k,l)
 =
 \frac1N\sum_{v=1}^N\pi_k(a_{v,N})\pi_l(b_{v,N}).
\]
By \cref{lem:Poissdisbound}, it is enough to prove
\[
 \frac1N\sum_{v=1}^N
 \min\{|a_{v,N}-\wi_v|+|b_{v,N}-\wo_v|,1\}
 \longrightarrow0.
\]
The terms coming from the normalisation are bounded by
\[
 \left|\frac{S_N^{\ssup{\textrm{out}}}}{L_N}-1\right|
 \frac1N\sum_{v=1}^N\wi_v
 +
 \left|\frac{S_N^{\ssup{\textrm{in}}}}{L_N}-1\right|
 \frac1N\sum_{v=1}^N\wo_v,
\]
which tends to zero by the strong law and \eqref{eq_LNnorm}.  The loop-exclusion
terms are bounded by
\[
 \frac2N\sum_{v=1}^N
 \min\left\{\frac{\wi_v\wo_v}{L_N},1\right\}.
\]
Since \eqref{eq_LNnorm} holds, this has the same limit as
$2N^{-1}\sum_{v=1}^N\min\{\wi_v\wo_v/N,1\}$.  For fixed $K<\infty$,
\[
 \min\left\{\frac{\wi_v\wo_v}{N},1\right\}
 \le
 \frac{K}{N}+\1\{\wi_v\wo_v>K\}.
\]
Taking the Cesaro limit along the fixed weight sequence and then letting
$K\to\infty$ proves that the loop-exclusion contribution vanishes.
\end{proof}

\begin{lem}\label{lem:degree-variance}
For fixed $k,l\in\N_0$ and almost every weight sequence satisfying
\eqref{eq_LNnorm},
\[
 \operatorname{Var}_{\cW}(\eta_N(k,l))\longrightarrow0.
\]
\end{lem}
\begin{proof}
Let $X_v:=\1\{\Di_v=k,\Do_v=l\}$.  Then
\[
 \operatorname{Var}_{\cW}(\eta_N(k,l))
 =
 \frac1{N^2}\sum_{v=1}^N\operatorname{Var}_{\cW}(X_v)
 +
 \frac2{N^2}\sum_{1\le u<v\le N}
 \operatorname{Cov}_{\cW}(X_u,X_v).
\]
The diagonal contribution is at most $1/N$.  For $u\ne v$, remove the two
edge variables $E(u,v)$ and $E(v,u)$ from the degrees of $u$ and $v$, and let
$X_u'$ and $X_v'$ be the corresponding indicators.  The variables $X_u'$ and
$X_v'$ are independent under $\bP_\cW$.  Moreover $X_u=X_u'$ and $X_v=X_v'$
unless $E(u,v)+E(v,u)>0$.  Therefore
\[
 |\operatorname{Cov}_{\cW}(X_u,X_v)|
 \le
 3\bP_\cW(E(u,v)+E(v,u)>0)
 \le
 3\frac{\wo_u\wi_v+\wo_v\wi_u}{L_N}.
\]
It follows that
\[
 \operatorname{Var}_{\cW}(\eta_N(k,l))
 \le
 \frac1N+
 \frac{6}{N^2L_N}
 \sum_{1\le u<v\le N}(\wo_u\wi_v+\wo_v\wi_u)
 \le
 \frac1N+\frac{6S_N^{\ssup{\textrm{out}}}S_N^{\ssup{\textrm{in}}}}
 {N^2L_N},
\]
which tends to zero.
\end{proof}

\begin{proof}[Proof of \cref{thm:quencheddegdis}]
Fix a weight sequence for which the strong law holds and \eqref{eq_LNnorm}.  By
\cref{lem:degree-mean,lem:degree-variance}, for every fixed $k,l\in\N_0$,
\[
 \eta_N(k,l)-\bar\eta_N(k,l)\longrightarrow0
 \qquad\text{in }\bP_\cW\text{-probability}.
\]
The strong law gives $\bar\eta_N(k,l)\to\eta(k,l)$.  Thus
$\eta_N(k,l)\to\eta(k,l)$ in $\bP_\cW$-probability for each fixed $(k,l)$.

It remains to upgrade pointwise convergence to total variation.  Choose
$R<\infty$ so large that
\[
 \eta\big(\{(k,l):k+l>R\}\big)<\varepsilon.
\]
For the empirical tail, Markov's inequality and the conditional degree means
give
\[
 \bE_\cW\eta_N(\{k+l>R\})
 \le
 \frac1R\frac1N\sum_{v=1}^N
 \bE_\cW(\Di_v+\Do_v)
 \le
 \frac{2S_N^{\ssup{\textrm{out}}}S_N^{\ssup{\textrm{in}}}}
 {RN L_N}.
\]
The last expression is bounded by $3\mu/R$ for all large $N$.  Taking $R$
larger if necessary makes this expectation smaller than $\varepsilon^2$, and
Markov's inequality gives
\[
 \bP_\cW\bigl(\eta_N(\{(k,l):k+l>R\})>\varepsilon\bigr)
 \le \varepsilon
\]
for all large $N$.
On the finite set $\{(k,l):k+l\le R\}$, pointwise convergence implies
\[
 \sum_{k+l\le R}|\eta_N(k,l)-\eta(k,l)|
 \longrightarrow0
 \qquad\text{in }\bP_\cW\text{-probability}.
\]
Combining the finite part with the two tail estimates gives
$d_{\textup{TV}}(\eta_N,\eta)\to0$ in $\bP_\cW$-probability.  Since the set of
weight sequences used above has $\PP$-probability one, the annealed
convergence follows.
\end{proof}
\section{Graph inputs for exploration processes}\label{sec:exploration-inputs}

This section records the graph estimates that are needed when processes on
$\Gg_N(\cW)$ are compared with branching processes.  The general
inhomogeneous-digraph framework and the associated exploration philosophy are
developed in \cite{cao2020connectivity}.  Here we keep the rank-one
Poissonian structure explicit, because the resulting error terms are simple
weight-mass quantities and remain useful when the graph is used only as an
input for a later dynamics argument.

For a finite multiset $A$ with labels in $[N]$ write
\[
 W^{\ssup{\textrm{in}}}(A):=\sum_{v\in A}\wi_v,
 \qquad
 W^{\ssup{\textrm{out}}}(A):=\sum_{v\in A}\wo_v,
\]
where labels are counted with multiplicity.  For ordinary sets this agrees
with the usual sum, and $\operatorname{supp}(A)$ denotes the underlying set of
labels.  When $A$ is already a set, $\operatorname{supp}(A)=A$; intersections
between multisets and sets are understood through this support.  Finally set
\[
 S_N^{\ssup{\textrm{in}}}:=\sum_{v=1}^N\wi_v,
 \qquad
 S_N^{\ssup{\textrm{out}}}:=\sum_{v=1}^N\wo_v,
 \qquad
 S_{2,N}^{\ssup{\textrm{in}}}:=\sum_{v=1}^N(\wi_v)^2.
\]
The standing normalisation assumption \eqref{eq_LNnorm}, together with the
strong law, gives
\[
 \frac{S_N^{\ssup{\textrm{in}}}}{L_N}\to1,
 \qquad
 \frac{S_N^{\ssup{\textrm{out}}}}{L_N}\to1
\]
almost surely; these are the normalisation facts used below.

\subsection{Regular variation estimates}

We use survival-tail exponents.  Thus a non-negative random variable $X$ is
regularly varying with exponent $\alpha$ if
\[
 \PP(X>x)=x^{-\alpha}\ell(x),
\]
where $\ell$ is slowly varying.  If a physics degree distribution is described
by a mass exponent $\tau$, then the corresponding survival exponent is
$\alpha=\tau-1$.

\begin{prop}[Power-law maxima and power sums]\label{prop:rv-power-sums}
Let $X_1,X_2,\ldots$ be i.i.d.\ non-negative random variables satisfying
\[
 \PP(X>x)=x^{-\alpha}\ell(x),\qquad \alpha>0,
\]
and let $M_N=\max_{v\le N}X_v$ and
$S_{p,N}=\sum_{v=1}^N X_v^p$.  Then
\[
 M_N=N^{1/\alpha+o_\PP(1)}.
\]
Moreover, if $p<\alpha$, then $N^{-1}S_{p,N}\to \EE X^p$ almost surely, while
if $p>\alpha$, then
\[
 S_{p,N}=N^{p/\alpha+o_\PP(1)}.
\]
In particular, if $\wi$ has survival exponent $\alpha_{\textrm{in}}\in(1,2)$,
then
\[
 S_{2,N}^{\ssup{\textrm{in}}}
 =
 N^{2/\alpha_{\textrm{in}}+o_\PP(1)}.
\]
\end{prop}

\begin{proof}
For the maximum, fix $\varepsilon>0$.  The union bound gives
\[
 \PP(M_N>N^{1/\alpha+\varepsilon})
 \le
 N\PP(X>N^{1/\alpha+\varepsilon})
 =
 N^{-\alpha\varepsilon+o(1)}\to0.
\]
Similarly,
\[
 \PP(M_N\le N^{1/\alpha-\varepsilon})
 =
 \bigl(1-\PP(X>N^{1/\alpha-\varepsilon})\bigr)^N
 \le
 \exp\{-N^{\alpha\varepsilon+o(1)}\}\to0.
\]

The case $p<\alpha$ follows from the strong law, since then
$\EE X^p<\infty$.  For $p>\alpha$, the lower bound follows from
$S_{p,N}\ge M_N^p$.  For the upper bound, choose $\delta>0$ and put
$b_N=N^{1/\alpha+\delta}$.  The maximum estimate implies
$M_N\le b_N$ with probability tending to one.  On this event,
\[
 S_{p,N}
 =
 \sum_{v=1}^N X_v^p\1\{X_v\le b_N\}.
\]
By Karamata's theorem \cite{bingham1987regular},
\[
 \EE\bigl[X^p\1\{X\le b_N\}\bigr]
 =
 N^{p/\alpha-1+\delta(p-\alpha)+o(1)}.
\]
Markov's inequality therefore gives
$S_{p,N}\le N^{p/\alpha+\varepsilon}$ with high probability after taking
$\delta$ sufficiently small.  Since $\varepsilon$ is arbitrary, the claim
follows.
\end{proof}

\begin{prop}[Truncated power sums]\label{prop:rv-truncated-sums}
Let $X_1,X_2,\ldots$ be i.i.d.\ non-negative random variables satisfying
\[
 \PP(X>x)=x^{-\alpha}\ell(x),\qquad \alpha>0.
\]
For $t>0$ put
\[
 S_{p,N}:=\sum_{v=1}^N X_v^p,
 \qquad
 T_{p,N}(t):=\sum_{v=1}^N X_v^p\1\{X_v\le t\},
 \qquad
 U_{p,N}(t):=\sum_{v=1}^N X_v^p\1\{X_v>t\}.
\]
Let $t_N$ be deterministic with $\log t_N/\log N\to\beta\in(0,\infty)$.
If $p<\alpha$, then
\[
 \frac1N T_{p,N}(t_N)\longrightarrow \EE X^p,
 \qquad
 \frac1N U_{p,N}(t_N)\longrightarrow0
\]
in probability.  If $p>\alpha$, then
\[
 T_{p,N}(t_N)
 \le
 N^{\left(1+\beta(p-\alpha)\right)\wedge(p/\alpha)+o_\PP(1)}.
\]
Moreover, the upper bound is sharp away from the extreme-value boundary:
if $\beta<1/\alpha$, then
\[
 T_{p,N}(t_N)=N^{1+\beta(p-\alpha)+o_\PP(1)}
 \quad\text{and}\quad
 U_{p,N}(t_N)=N^{p/\alpha+o_\PP(1)},
\]
whereas if $\beta>1/\alpha$, then
\[
 T_{p,N}(t_N)=S_{p,N}=N^{p/\alpha+o_\PP(1)}
 \quad\text{and}\quad
 U_{p,N}(t_N)=0
\]
with probability tending to one.
\end{prop}

\begin{proof}
For $p<\alpha$, the strong law gives
$N^{-1}\sum_{v=1}^N X_v^p\to\EE X^p$ almost surely.  Also, for every fixed
$R$ and all large $N$ with $t_N\ge R$,
\[
 \frac1N U_{p,N}(t_N)
 \le
 \frac1N\sum_{v=1}^N X_v^p\1\{X_v>R\}.
\]
Taking the limsup and then $R\to\infty$ proves the claim.

Now assume $p>\alpha$.  The full-sum bound
$T_{p,N}(t_N)\le S_{p,N}=N^{p/\alpha+o_\PP(1)}$ follows from
\cref{prop:rv-power-sums}.  Karamata's theorem gives
\[
 \EE\bigl[X^p\1\{X\le t_N\}\bigr]
 =
 N^{\beta(p-\alpha)+o(1)}.
\]
Markov's inequality therefore gives
$T_{p,N}(t_N)\le N^{1+\beta(p-\alpha)+o_\PP(1)}$, proving the displayed upper
bound.

If $\beta<1/\alpha$, fix $\delta>0$ small enough that
$\beta-\delta>0$.  Since $N^{\beta-\delta}<t_N$ for all large $N$, the
number of indices with $X_v\in(N^{\beta-\delta},t_N]$ has mean
$N^{1-\alpha\beta+\alpha\delta+o(1)}\to\infty$ and is concentrated around
its mean.  Hence
\[
 T_{p,N}(t_N)\ge
 N^{p(\beta-\delta)}
 N^{1-\alpha\beta+\alpha\delta+o_\PP(1)}
 =
 N^{1+\beta(p-\alpha)-\delta(p-\alpha)+o_\PP(1)}.
\]
Letting $\delta\downarrow0$ gives the lower bound.  Since $t_N$ is below the
maximum scale, $M_N>t_N$ with high probability, and therefore
$U_{p,N}(t_N)\ge M_N^p=N^{p/\alpha+o_\PP(1)}$; the matching upper bound is
$U_{p,N}(t_N)\le S_{p,N}$.

If $\beta>1/\alpha$, then $t_N$ is above the maximum scale, so
$M_N\le t_N$ with probability tending to one.  On this event
$T_{p,N}(t_N)=S_{p,N}$ and $U_{p,N}(t_N)=0$.
\end{proof}

\begin{lem}\label{lem:size-biased-tails}
Let $X\ge0$ have finite mean and satisfy
\[
 \PP(X>x)=x^{-\alpha}\ell(x),\qquad \alpha>1.
\]
If $X^\star$ has the $X$-size-biased law
\[
 \PP(X^\star\in dx)=\frac{x}{\EE X}\PP(X\in dx),
\]
then
\[
 \PP(X^\star>x)
 =
 \frac{\EE[X\1\{X>x\}]}{\EE X}
 \sim
 \frac{\alpha}{\alpha-1}\frac{x\PP(X>x)}{\EE X}.
\]
Thus size-biasing lowers the survival exponent from $\alpha$ to
$\alpha-1$.
\end{lem}

\begin{proof}
The identity is the definition of the size-biased law.  The asymptotic form is
Karamata's theorem applied to the tail integral
$\EE[X\1\{X>x\}]=x\PP(X>x)+\int_x^\infty\PP(X>y)\dint y$.
\end{proof}

\begin{prop}[Product weights and finite criticality]\label{prop:rv-product-weights}
Let $W=(\wi,\wo)$ have finite marginal means
$\EE\wi=\EE\wo=\mu\in(0,\infty)$.
\begin{enumerate}
\item If $\rho:=\EE[\wi\wo]<\infty$, then
\[
 \frac1N\sum_{v=1}^N\wi_v\wo_v\longrightarrow\rho
\]
almost surely.
\item If $\wi$ and $\wo$ are independent and have survival exponents
$\alpha_{\textrm{in}},\alpha_{\textrm{out}}>1$, then
$\rho=\mu^2$ and
\[
 \PP(\wi\wo>x)=x^{-\alpha_\times+o(1)},
 \qquad
 \alpha_\times:=\alpha_{\textrm{in}}\wedge\alpha_{\textrm{out}}.
\]
\item If $\wi=\wo=X$ and $X$ has survival exponent $\alpha$, then
$\wi\wo=X^2$ has survival exponent $\alpha/2$.  In particular,
$\rho=\EE X^2$ is infinite throughout the infinite-variance range
$\alpha\in(1,2)$.
\end{enumerate}
Consequently, the independent finite-mean power-law model has a finite
branching critical point, whereas the perfectly correlated
infinite-variance model does not.
\end{prop}

\begin{proof}
The first statement is the strong law of large numbers.  If $\wi$ and $\wo$
are independent, then $\rho=\EE\wi\,\EE\wo=\mu^2$.  Put
$\alpha_\times=\alpha_{\textrm{in}}\wedge\alpha_{\textrm{out}}$.  For every
$s<\alpha_\times$,
\[
 \EE[(\wi\wo)^s]=\EE[(\wi)^s]\EE[(\wo)^s]<\infty,
\]
so Markov's inequality gives $\PP(\wi\wo>x)\le x^{-s+o(1)}$.  Conversely,
if $\alpha_\times=\alpha_{\textrm{in}}$, choose $c>0$ with
$\PP(\wo>c)>0$ and use
\[
 \PP(\wi\wo>x)\ge \PP(\wo>c)\PP(\wi>x/c)
 =x^{-\alpha_{\textrm{in}}+o(1)}.
\]
The case $\alpha_\times=\alpha_{\textrm{out}}$ is identical.  Letting
$s\uparrow\alpha_\times$ proves the product-tail exponent.  Finally,
if $\wi=\wo=X$, then
\[
 \PP(\wi\wo>x)=\PP(X>\sqrt{x})=x^{-\alpha/2+o(1)},
\]
and $\EE X^2=\infty$ when $\alpha<2$.
\end{proof}

\subsection{Fixed-depth local weak limits}

The symbol $\nu=(\vi,\vo)$ is already used in the model definition for the
second-moment vector.  We therefore denote size-biased type laws by
$\lambda_W^{\ssup{\textrm{in}*}}$ and
$\lambda_W^{\ssup{\textrm{out}*}}$.  Define their empirical analogues by
\[
 \widehat\lambda_N^{\ssup{\textrm{in}*}}
 :=
 \frac1{S_N^{\ssup{\textrm{in}}}}
 \sum_{v=1}^N \wi_v\delta_{W_v},
 \qquad
 \widehat\lambda_N^{\ssup{\textrm{out}*}}
 :=
 \frac1{S_N^{\ssup{\textrm{out}}}}
 \sum_{v=1}^N \wo_v\delta_{W_v}.
\]
Their limits are
\[
 \lambda_W^{\ssup{\textrm{in}*}}(dw)
 :=
 \frac{w^{\ssup{\textrm{in}}}}{\mu}\lambda_W(dw),
 \qquad
 \lambda_W^{\ssup{\textrm{out}*}}(dw)
 :=
 \frac{w^{\ssup{\textrm{out}}}}{\mu}\lambda_W(dw).
\]

\begin{lem}\label{lem:weighted-empirical-laws}
For every bounded continuous $f:[0,\infty)^2\to\mathbb R$,
\[
 \int f\,\dint\widehat\lambda_N^{\ssup{\textrm{in}*}}
 =
 \frac{\sum_{v=1}^N\wi_v f(W_v)}
      {\sum_{v=1}^N\wi_v}
 \longrightarrow
 \frac{\EE[\wi f(W)]}{\mu}
\]
almost surely.  The analogous convergence holds for
$\widehat\lambda_N^{\ssup{\textrm{out}*}}$.
\end{lem}

\begin{proof}
Since $f$ is bounded and $\EE\wi<\infty$, the variables $\wi_v f(W_v)$ are
integrable.  The strong law applies to the numerator and denominator.  Taking
the ratio proves the in-size-biased statement; the out-size-biased statement
is identical.
\end{proof}

In this subsection a directed out-neighbourhood means the breadth-first
directed unfolding from the root.  Each outgoing arc from a frontier particle
creates a child particle marked by its endpoint label and type; parallel arcs
and repeated endpoint labels are retained as separate children in the
unfolding.  Thus the object is a rooted marked tree, not the full directed
subgraph induced by all vertices at distance at most $r$.

\begin{thm}[Fixed-depth directed local weak limit]\label{thm:directed-lwl}
Let $U_N$ be uniformly distributed on $[N]$, independently of the graph and
weights.  For each fixed $r\ge0$, the marked breadth-first directed
out-neighbourhood tree of $U_N$ in $\Gg_N(\cW)$ up to distance $r$ converges
in distribution to the first $r$ generations of the following typed
Galton--Watson tree:
\begin{itemize}
\item the root type has law $\lambda_W$;
\item a particle of type $w=(w^{\ssup{\textrm{in}}},w^{\ssup{\textrm{out}}})$
has $\operatorname{Poisson}(w^{\ssup{\textrm{out}}})$ children;
\item child types are independent with law
$\lambda_W^{\ssup{\textrm{in}*}}$.
\end{itemize}
The breadth-first in-neighbourhood tree has the analogous limit with the roles
of in- and out-weights exchanged.  If every directed edge is independently
retained with probability $q\in[0,1]$, the forward offspring distribution becomes
$\operatorname{Poisson}(q w^{\ssup{\textrm{out}}})$.
\end{thm}

\begin{proof}
We give the forward proof.  Conditional on the weights and on a source vertex
$u$, the outgoing arcs from $u$ form a Poisson point process on $[N]$ with
intensity
\[
 \Lambda_u^{\ssup{\textrm{out}}}(\{v\})
 =
 \frac{\wo_u\wi_v}{L_N}.
\]
Consequently the total outgoing multiplicity is
$\operatorname{Poisson}(\wo_uS_N^{\ssup{\textrm{in}}}/L_N)$ and, conditional
on this total, the child labels are independent with law
$\wi_v/S_N^{\ssup{\textrm{in}}}$.

Construct the corresponding finite-$N$ typed branching process in which the
root type is $W_{U_N}$, a type $w$ particle has
$\operatorname{Poisson}(w^{\ssup{\textrm{out}}}S_N^{\ssup{\textrm{in}}}/L_N)$
children, and child types have law
$\widehat\lambda_N^{\ssup{\textrm{in}*}}$.
The ordinary empirical law
\[
 \frac1N\sum_{v=1}^N\delta_{W_v}\Rightarrow\lambda_W
\]
holds almost surely by the strong law applied to bounded continuous test
functions, and hence $W_{U_N}\Rightarrow W$.  Together with
\cref{lem:weighted-empirical-laws} and
$S_N^{\ssup{\textrm{in}}}/L_N\to1$, this implies convergence of the
finite-$N$ typed branching process to the stated limit.  Indeed, one proves
this by induction over the depth: after convergence through generation
$s<r$, only finitely many particles are present with probability arbitrarily
close to one, and their Poisson offspring laws and child-type laws converge
jointly.

It remains to compare the branching construction with the graph
unfolding.  If all labels sampled during the breadth-first construction
are distinct and no sampled label has already been exposed, the two rooted
marked objects coincide.  On the event that at most $K$ labels are sampled,
the probability of a repeated sampled label is bounded by
\[
 \binom{K}{2}\sum_{v=1}^N
 \left(\frac{\wi_v}{S_N^{\ssup{\textrm{in}}}}\right)^2.
\]
This tends to zero almost surely, because finite mean implies
$\max_{v\le N}\wi_v/S_N^{\ssup{\textrm{in}}}\to0$.  The probability that one
of the at most $K$ sampled targets equals the uniform root is bounded by
\[
 K\,\EE\left[
 \left.
 \frac{\wi_{U_N}}{S_N^{\ssup{\textrm{in}}}}
 \right|\cW
 \right]
 =
 \frac{K}{N}.
\]
Thus, before $K$ samples, every possible old-label hit is either a target
repeat or a hit on the root, and the probability of such a collision tends to
zero.  Since the limiting depth-$r$ tree is finite almost surely, first choose
$K$ large and then let $N\to\infty$.  This proves the forward convergence.
The backward statement is the same argument applied to incoming arcs, and the
$q$-thinned statement follows from Poisson thinning.
\end{proof}

\begin{cor}\label{cor:branching-criticality}
For the $q$-thinned forward local limit, the mean offspring number of a
non-root particle is
\[
 m(q)
 =
 q\int w^{\ssup{\textrm{out}}}\,
      \lambda_W^{\ssup{\textrm{in}*}}(dw)
 =
 q\frac{\rho}{\mu},
\]
with the convention that $m(q)=\infty$ for $q>0$ if $\rho=\infty$.  Hence, if
$\rho<\infty$, the formal critical value is $q_*=\mu/\rho$.  It is an
admissible transition point in $(0,1)$ precisely when $\rho>\mu$; if
$\rho=\mu$ the process is critical at $q=1$, while if $\rho<\mu$ it is
subcritical for every $q\in[0,1]$.  If $\rho=\infty$, the threshold is
$q_c=0$.

If $\wi$ and $\wo$ are independent, then $\rho=\mu^2$.  In this case
$q_c=1/\mu$ lies in $(0,1)$ only when $\mu>1$; for $\mu=1$ criticality occurs
at $q=1$, and for $\mu<1$ the process is subcritical for all admissible $q$.
\end{cor}

\begin{proof}
The formula for $m(q)$ is immediate from the type law of non-root particles in
\cref{thm:directed-lwl}.  The remaining assertions only record the restriction
$q\in[0,1]$.
\end{proof}

\subsection{Stopped growing explorations}

We now keep $N$ fixed and work conditionally on the weights.  Start from a
finite set $A_0\subset[N]$ of distinct labels.  The unrestricted ideal
labelled out-exploration has a multiset frontier $Q_t$ and an exposed label
set $R_t$, with $Q_0$ equal to $A_0$ as a multiset and $R_0=A_0$.  Given
$Q_t$, each particle $u\in Q_t$ independently produces
\[
 \operatorname{Poisson}\left(\wo_u
 \frac{S_N^{\ssup{\textrm{in}}}}{L_N}\right)
\]
children, whose labels are sampled independently from
$\wi_v/S_N^{\ssup{\textrm{in}}}$.  Let $\widetilde Q_{t+1}$ be the multiset of
all child labels, and set
\[
 Q_{t+1}:=\widetilde Q_{t+1},
 \qquad
 R_{t+1}:=R_t\cup\operatorname{supp}(Q_{t+1}).
\]
This recursion is used on all outcomes; it is not stopped after a collision.
Define
\[
 \tau_{\textrm{coll}}
 :=
 \inf\{s\ge1:\operatorname{supp}(\widetilde Q_s)\cap R_{s-1}\ne\varnothing
 \textrm{ or }\widetilde Q_s\textrm{ contains a repeated label}\}.
\]
Before this time the labelled branching exploration agrees with the
breadth-first out-exploration of the Poissonian multigraph.

\begin{lem}\label{lem:onestep-collision}
Let
\[
 O_t:=W^{\ssup{\textrm{out}}}(Q_t),
 \qquad
 I_t:=W^{\ssup{\textrm{in}}}(R_t).
\]
Conditional on the weights and on the ideal exploration through generation
$t$,
\[
 \bP_\cW(\widetilde Q_{t+1}\cap R_t\ne\varnothing\mid\mathcal F_t)
 \le
 \frac{O_tI_t}{L_N}
\]
and
\[
 \bP_\cW(\widetilde Q_{t+1}\textrm{ contains a repeated label}
      \mid\mathcal F_t)
 \le
 \frac{O_t^2}{2L_N^2}S_{2,N}^{\ssup{\textrm{in}}}.
\]
\end{lem}

\begin{proof}
Conditional on $Q_t$, the number of generated children with labels in
$B\subset[N]$ is Poisson with mean
$O_tW^{\ssup{\textrm{in}}}(B)/L_N$.  Taking $B=R_t$ and using
$1-\e^{-x}\le x$ gives the first bound.

For repeated labels, let $Z_v$ be the number of generated children with label
$v$.  Conditional on $\mathcal F_t$, the variables $Z_v$ are independent
Poisson variables with means $\lambda_v=O_t\wi_v/L_N$.  Hence
\[
 \bP_\cW(\exists v:Z_v\ge2\mid\mathcal F_t)
 \le
 \frac12\sum_{v=1}^N\lambda_v^2
 =
 \frac{O_t^2}{2L_N^2}S_{2,N}^{\ssup{\textrm{in}}}.
\]
\end{proof}

\begin{prop}[Stopped exploration coupling]\label{prop:stopped-coupling}
For $T\ge1$ define the predictable budget
\[
 B_T
 :=
 \sum_{t=0}^{T-1}
 \left(
 \frac{O_tI_t}{L_N}
 +
 \frac{O_t^2}{2L_N^2}S_{2,N}^{\ssup{\textrm{in}}}
 \right),
\]
where the quantities are evaluated in the unrestricted ideal labelled
exploration just defined.  Then
\[
 \bP_\cW(\tau_{\textrm{coll}}\le T)
 \le
 \bE_\cW B_T.
\]
Moreover, if
\[
 \sigma_N(\varepsilon_N)
 :=
 \inf\{T\ge0:B_{T+1}>\varepsilon_N\},
\]
then
\[
 \bP_\cW(\tau_{\textrm{coll}}\le T\wedge\sigma_N(\varepsilon_N))
 \le
 \varepsilon_N.
\]
\end{prop}

\begin{proof}
Sum the two estimates of \cref{lem:onestep-collision} over generations and
take conditional expectation:
\[
 \bP_\cW(\tau_{\textrm{coll}}\le T)
 \le
 \bE_\cW\left[
 \sum_{t=0}^{T-1}
 \bP_\cW(\tau_{\textrm{coll}}=t+1\mid\mathcal F_t)
 \right]
 \le
 \bE_\cW B_T.
\]
The stopped statement follows by applying the same predictable union bound to
$T\wedge\sigma_N(\varepsilon_N)$; by definition the budget accumulated before
this stopping time is at most $\varepsilon_N$.
\end{proof}

If each generated child is independently retained with probability $q$, the
same proof gives the following thinned version.
\begin{cor}\label{cor:thinned-collision-budget}
Fix $q\in[0,1]$ and define
\[
 B_T^{\ssup q}
 =
 \sum_{t=0}^{T-1}
 \left(
 q\frac{O_tI_t}{L_N}
 +
 q^2\frac{O_t^2}{2L_N^2}S_{2,N}^{\ssup{\textrm{in}}}
 \right).
\]
Here $Q_t,R_t,O_t,I_t$ and $\tau_{\textrm{coll}}$ are defined as above for
the unrestricted retained-child exploration, using only retained child labels.
Then
\[
 \bP_\cW(\tau_{\textrm{coll}}\le T)
 \le
 \bE_\cW B_T^{\ssup q}.
\]
Moreover, with
\[
 \sigma_N^{\ssup q}(\varepsilon_N)
 :=
 \inf\{T\ge0:B_{T+1}^{\ssup q}>\varepsilon_N\},
\]
one has
\[
 \bP_\cW(\tau_{\textrm{coll}}\le T\wedge\sigma_N^{\ssup q}(\varepsilon_N))
 \le
 \varepsilon_N.
\]
In particular, if an event $\mathcal G_N$ for the ideal exploration implies
$B_{T+1}^{\ssup q}\le\varepsilon_N$, then
\[
 \bP_\cW(\tau_{\textrm{coll}}\le T,\ \mathcal G_N)
 \le
 \varepsilon_N.
\]
\end{cor}

\begin{proof}
Retaining each generated child with probability $q$ thins the old-label
Poisson mean by $q$ and the two-child repeated-label bound by $q^2$.  The
proof of \cref{prop:stopped-coupling} then applies verbatim.  On
$\mathcal G_N$ we have $T\le\sigma_N^{\ssup q}(\varepsilon_N)$, which gives
the final display.
\end{proof}

\subsection{Simple projection}

Let $G_N^{\textrm{simp}}$ be the simple projection of the Poissonian
multigraph, i.e.
\[
 A_N(u,v)=\1\{E_N(u,v)\ge1\}.
\]
Then
\[
 \bP_\cW(A_N(u,v)=1)
 =
 1-\exp\left(-\frac{\wo_u\wi_v}{L_N}\right).
\]

\begin{prop}[Projection and linearisation]\label{prop:simple-projection-input}
The stopped coupling in \cref{prop:stopped-coupling} and
\cref{cor:thinned-collision-budget} applies to the directed simple projection
when it is generated from the Poissonian multigraph by declaring an arc
present iff its Poisson multiplicity is positive.  Before the first old-label
or repeated-label collision, the multigraph exploration and the simple
projection exploration have the same discovered label tree.

For every deterministic set of queried ordered pairs
$\mathcal P\subset[N]\times[N]$,
\[
 \sum_{(u,v)\in\mathcal P}
 \left[
 \frac{\wo_u\wi_v}{L_N}
 -
 \left(1-\exp\left(-\frac{\wo_u\wi_v}{L_N}\right)\right)
 \right]
 \le
 \frac12\sum_{(u,v)\in\mathcal P}
 \left(\frac{\wo_u\wi_v}{L_N}\right)^2.
\]
In particular, for a queried source set $Q\subset[N]$, the total
linearisation error over all possible targets is bounded by
\[
 \frac12\sum_{u\in Q}\sum_{v=1}^N
 \left(\frac{\wo_u\wi_v}{L_N}\right)^2
 =
 \frac{S_{2,N}^{\ssup{\textrm{in}}}}{2L_N^2}
 \sum_{u\in Q}(\wo_u)^2.
\]
\end{prop}

\begin{proof}
The first statement is a deterministic consequence of the coupling
$A_N(u,v)=\1\{E_N(u,v)\ge1\}$.  The exploration is tree-like precisely until an
old label or a repeated new label is generated.  Parallel arcs from one source
to one target and arcs from different frontier sources to the same target are
therefore both included in the repeated-label stopping rule.  The
linearisation bound follows from
$0\le x-(1-\e^{-x})\le x^2/2$.
\end{proof}

\begin{cor}\label{cor:simple-projection-lwl}
For each fixed $r$, the marked breadth-first in- and out-neighbourhood trees
of a uniformly chosen root in $G_N^{\textrm{simp}}$ have the same local weak
limits as in \cref{thm:directed-lwl}.
\end{cor}

\begin{proof}
In the breadth-first construction used in the proof of
\cref{thm:directed-lwl}, the multigraph and simple projection differ before
depth $r$ only if a sampled target label is old or repeated.  The fixed-depth
collision estimate in that proof shows that this event has probability
tending to zero.
\end{proof}

\subsection{Two-seed and rare-event inputs}

The following form is useful when one wants a statement that is quenched in
the realised graph rather than only in the weights.  It is still a graph
estimate: the marks or thinning used by a later process enter only through
the retained labelled exploration.

Run two $q$-thinned ideal labelled explorations with distinct roots.  For
$i=1,2$, let $Q_t^{\ssup i}$ and $R_t^{\ssup i}$ be their frontiers and
exposed sets, and put
\[
 O_t^{\ssup i}:=W^{\ssup{\textrm{out}}}(Q_t^{\ssup i}),
 \qquad
 I_t^{\ssup i}:=W^{\ssup{\textrm{in}}}(R_t^{\ssup i}).
\]
Let $\tau_i$ be the internal collision time of exploration $i$ and let
$\tau_\times$ be the first time a label generated by one exploration is
already exposed in the other exploration, or both explorations generate the
same new label.

\begin{prop}[Two-seed cross-collision budget]\label{prop:twoseed-budget}
With
\[
 B_{\times,T}^{\ssup q}
 :=
 \sum_{t=0}^{T-1}
 \left[
 q\frac{O_t^{\ssup1}I_t^{\ssup2}
        +O_t^{\ssup2}I_t^{\ssup1}}{L_N}
 +
 q^2\frac{O_t^{\ssup1}O_t^{\ssup2}}{L_N^2}
 S_{2,N}^{\ssup{\textrm{in}}}
 \right],
\]
where the masses are computed in the two unrestricted retained-child ideal
explorations.  Then
\[
 \bP_\cW(\tau_\times\le T,\ \tau_1>T,\ \tau_2>T)
 \le
 \bE_\cW B_{\times,T}^{\ssup q}.
\]
Moreover, with
\[
 \sigma_{\times,N}(\varepsilon_N)
 :=
 \inf\{T\ge0:B_{\times,T+1}^{\ssup q}>\varepsilon_N\},
\]
one has
\[
 \bP_\cW(\tau_\times\le T\wedge\sigma_{\times,N}(\varepsilon_N),\
          \tau_1>T,\ \tau_2>T)
 \le
 \varepsilon_N.
\]
\end{prop}

\begin{proof}
Before the first internal or cross collision, the two explorations query
disjoint ordered edge variables.  Conditional on the weights and the two
histories through generation $t$, the number of retained children generated
by exploration $i$ with label $v$ is Poisson with mean
\[
 \lambda_{i,v}=q\frac{O_t^{\ssup i}\wi_v}{L_N}.
\]
Thus the probability that exploration $1$ hits the exposed set of exploration
$2$ is at most $qO_t^{\ssup1}I_t^{\ssup2}/L_N$, and the symmetric hit gives
the second old-label term.  For a simultaneous new-label overlap,
independence of the disjoint edge variables gives
\[
 \sum_{v=1}^N\lambda_{1,v}\lambda_{2,v}
 =
 q^2\frac{O_t^{\ssup1}O_t^{\ssup2}}{L_N^2}
 S_{2,N}^{\ssup{\textrm{in}}}.
\]
Summing over $t$ proves the displayed estimate, and the stopped version is
again the predictable union bound.  The same proof applies to the simple
projection generated from the Poissonian multigraph, because
$1-\exp(-x)\le x$ and distinct ordered pairs remain independent under the
Poisson construction.
\end{proof}

\begin{cor}[One-sided rare-event criterion]\label{cor:rare-event-criterion}
Let $\widehat H_N$ be an event determined by an ideal $q$-thinned labelled
exploration through generation $T_N$, and put
$p_N:=\bP_\cW(\widehat H_N)$.  Let $H_N(G_N,U,\omega)$ be the corresponding
event for the same exploration performed in the realised graph, where $U$ is
a uniform seed and $\omega$ denotes the independent thinning marks.  Define
\[
 F_N(G_N):=\bP_{G_N}(H_N(G_N,U,\omega)).
\]
Assume that, under the canonical coupling of the graph exploration and the
ideal labelled exploration,
\[
 \1_{H_N(G_N,U,\omega)}\le \1_{\widehat H_N}
 \qquad\text{almost surely,}
\]
and that the two events agree on $\{\tau_{\textrm{coll}}>T_N\}$.  Assume
further, conditionally on the weights, that $Np_N\to\infty$ and that
\[
 \bP_\cW(\tau_{\textrm{coll}}\le T_N,\widehat H_N)=o(p_N),
\]
while for two independent ideal explorations with roots $U_1,U_2$,
\[
 \bP_\cW(U_1\ne U_2,\ \tau_\times\le T_N,\ \tau_1>T_N,\ \tau_2>T_N,\
        \widehat H_N^{\ssup1}\cap\widehat H_N^{\ssup2})
 =
 o(p_N^2).
\]
Then
\[
 F_N(G_N)=p_N+o_{\bP_\cW}(p_N).
\]
\end{cor}

\begin{proof}
The one-sided coupling gives
\[
 0\le p_N-\bE_\cW F_N(G_N)
 \le
 \bP_\cW(\tau_{\textrm{coll}}\le T_N,\widehat H_N)
 =
 o(p_N).
\]
For the second moment, expose two independent seeds and thinning families on
the same realised graph, and write $H_i,\widehat H_i$ for the graph and ideal
events.  The one-sided coupling gives
\[
 \bE_\cW F_N(G_N)^2
 =
 \bP_\cW(H_1\cap H_2)
 \le
 \bP_\cW(\widehat H_1\cap\widehat H_2)
 =
 p_N^2.
\]
For the reverse comparison, discard the event that the two roots coincide;
its contribution is at most $p_N/N=o(p_N^2)$.  With distinct roots, the two
graph explorations agree with the two independent ideal explorations whenever
there is no internal collision and no cross-collision before generation
$T_N$.  Hence
\[
\begin{aligned}
 p_N^2-\bE_\cW F_N(G_N)^2
 &\le
 \frac{p_N}{N}
 +2p_N\,\bP_\cW(\tau_{\textrm{coll}}\le T_N,\widehat H_N)\\
 &\quad+
 \bP_\cW(U_1\ne U_2,\ \tau_\times\le T_N,\ \tau_1>T_N,\ \tau_2>T_N,\
        \widehat H_N^{\ssup1}\cap\widehat H_N^{\ssup2})\\
 &=o(p_N^2).
\end{aligned}
\]
Thus $\bE_\cW F_N(G_N)^2=p_N^2+o(p_N^2)$.  Since also
$\bE_\cW F_N(G_N)=p_N+o(p_N)$, we have
$\operatorname{Var}_{\cW}(F_N(G_N))=o(p_N^2)$, and Chebyshev's inequality
proves the claim.
\end{proof}

\begin{rem}
For a non-monotone event, the one-sided assumption in
\cref{cor:rare-event-criterion} should be replaced by direct control of the
one-seed symmetric difference
$H_N(G_N,U,\omega)\triangle\widehat H_N$ at order $o(p_N)$ and the analogous
two-seed symmetric difference at order $o(p_N^2)$.
\end{rem}

\printbibliography

\end{document}